\newtheorem{theorem}{Theorem}
\newtheorem{lemma}{Lemma}
\newtheorem{proposition}{Proposition}
\theoremstyle{example}
\theoremstyle{definition}
\theoremstyle{remark}
\newtheorem{remark}{Remark}
\theoremstyle{remark}
\DeclareMathOperator{\am}{am}
\begin{document}

\title{The angle along a curve and range-kernel complementarity}
\author{Dimosthenis Drivaliaris}
\address{Department of Financial and Management Engineering\\
University of the Aegean
41, Kountouriotou Str.\\
82100 Chios\\
Greece}
\email{d.drivaliaris@fme.aegean.gr}
\author{Nikos Yannakakis}
\address{Department of Mathematics\\
National Technical University of Athens\\
Iroon Polytexneiou 9\\
15780 Zografou\\
Greece}
\email{nyian@math.ntua.gr}
\subjclass[2010]{47A10; 47A15}
\commby{}

\begin{abstract}
In this paper, we define the angle of a bounded linear operator $A$ along an unbounded path emanating from the origin and use it to characterize range-kernel complementarity. In particular we show that if $0$ faces the unbounded component of the resolvent set, then $X=R(A)\oplus N(A)$ if and only if $R(A)$ is closed and some angle of $A$ is less than $\pi$.
\end{abstract}

\maketitle
\section{Introduction}
Let $X$ be a Banach space and $A:X\rightarrow X$ be a bounded linear operator. We will denote the range of $A$ by $R(A)$ and the kernel of $A$ by $N(A)$.

The cosine of $A$, with respect to a semi-inner product $[\cdot, \cdot]$ compatible with the norm of $X$, is defined by
\begin{equation}
\nonumber
\cos A=\inf\left\lbrace\frac{Re[Ax, x]}{\|Ax\|\,\|x\|}\,:\,x\notin N(A)\right\rbrace\,.
\end{equation}
Using this one can define the angle
$\phi(A)$ of the linear operator $A$ by
\begin{equation}
\nonumber
\phi(A)=\arccos(\cos A)\,.
\end{equation}
The angle $\phi(A)$ of $A$ has an obvious geometric interpretation; it measures the maximum (real) turning effect of $A$. This concept was introduced independently by K. Gustafson in \cite{gus1} and by M. Krein in \cite{krein}.

A moment's thought reveals that $\phi(A)$  is just one of the many existng angles of the operator $A$. Indeed if $\theta\in [0,2\pi]$ is any angle, then $\phi(e^{i\theta}A)$ measures the maximum  turning effect of $A$ along the ray emanating from the origin
\[\{0\}\cup\left\{z\in\mathbb C: \arg z=\theta\right\}\,.\]
Related to the above  is  the so called ``amplitude angle'' of $A$, also  introduced by M. Krein in \cite{krein}, defined by
\[\am(A)=\min\left\{\phi(e^{i\theta}A):\,\theta\in [0,2\pi]\right\}\,.\]
The``amplitude angle'' of $A$ compares the angles along all possible ``directions'' and provides us with the smallest one.

Range-kernel complementarity, i.e. the decomposition
\begin{equation}
\label{compl}
X=R(A)\oplus N(A)\,,
\end{equation}
stands right next to the invertibility of $A$, since if (\ref{compl}) holds then $A$ is of the form ``invertible $\oplus\, 0$''. 

In finite dimensions (\ref{compl}) is equivalent to $R(A)\cap N(A)=\left\{0\right\}$
which in turn is equivalent to the ascent (the length of the null-chain) of  $A$ being less than or equal to one. 
In infinite dimensions things are significantly different as $R(A)\cap N(A)=\left\{0\right\}$  is no longer sufficient and one needs the additional assumption that  $R(A)=R(A^2)$. Note that the latter is equivalent to the descent (the lenght of the range chain) of  $A$ being less than or equal to one. 

In \cite[Theorem 3.4 and Remark 3.5 (vi)]{drivyann} it was shown that if $\am(A)<\pi$ and both $R(A)$ and $R(A)+N(A)$ are closed then (\ref{compl}) holds. The converse of this result is not true in general and the reason is simple:
as is mentioned by M. Krein in \cite{krein} $\am(A)<\pi$ implies that, for some $\theta\in [0,2\pi]$, the spectrum of $e^{i\theta}A$  is  contained in the sector 
\[S=\{0\}\cup\left\{z\in\mathbb C: |\arg z|\leq \phi(e^{i\theta}A)\right\}\,.\]
Hence  $\am(A)<\pi$ would imply that $\sigma(A)$ is contained in the sector $e^{-i\theta}S$  which is obviously not the case for an arbitrary operator in a infinite dimensional Banach space.

Since the inclusion of $\sigma(A)$ in some sector of the complex plane implies that there exists a ray emanating from the origin that does not intersect $\sigma(A)$,  it seems plausible to ask whether the converse is true if instead of rays one allows curves. As we will see the answer to this question turns out to be affirmative. To arrive at this conclusion we study operators for which there exists a curve emanating from the origin that does not intersect their spectrum, by defining their angle  along such curves. Having this in hand we  show that in this case (\ref{compl}) holds  if and only if $R(A)$ is closed and some such angle of $A$ is less than $\pi$.


\section{Preliminaries}
The ascent $\alpha(A)$ of $A$ is the smallest positive integer $k$ for which the null-chain  of $A$  terminates i.e. $N(A^k)=N(A^{k+1})$. If no such integer exists, then $\alpha(A)=\infty$. The descent $\delta(A)$ of $A$ is the smallest positive integer $k$ for which the range chain terminates i.e. $R(A^k)=R(A^{k+1})$. Again if no such integer exists, then $\delta(A)=\infty$. It is well known, see for example \cite[Proposition 38.4]{heuser}, that $ X=R(A)\oplus  N(A)$ if and only if both $ \alpha(A)$ and $\delta(A)$ are $\leq 1$.

If  by $d(x,N(A))$ we denote the distance of $x$ from $N(A)$, then there exists $M>0$ such that

\begin{equation}
\label{distance}
\|Ax\|\geq M d(x,N(A))\,,\text{ for all }x\in X\,.
\end{equation}

By $\partial \sigma(A)$ we denote as usual the boundary of the spectrum of the operator $A$ and by $\sigma_{app}(A)$ its approximate point spectrum. It is well-known that  $\partial \sigma(A)\subseteq\sigma_{app}(A)$ .
\section{The angle along a curve}
\subsection{Some motivation}

Let $H$ be a complex Hilbert space. If $x, y\in H$ are non-zero vectors then the number
\[\theta(x,y)=\arccos\frac{Re\langle x,y\rangle}{\|x\|\|y\|}\]
is usually called the angle between $x$ and $y$ (see for example \cite{krein}).

Recall that
\begin{equation}
\nonumber
\inf_{\lambda\leq 0}\frac{\|x-\lambda y\|}{\|x\|}=1
\end{equation}
is equivalent to $\theta(x,y)$ being acute. Moreover it can be easily seen that if this is not the case then
\begin{equation}
\label{identity}
\left(\inf_{\lambda\leq 0}\frac{\|x- \lambda y\|}{\|x\|}\right)^2+\left(\frac{Re\langle x,y\rangle}{\|x\|\|y\|}\right)^2=1\,.
\end{equation}
Hence  taking
\[\inf_{\lambda\leq 0}\frac{\|x-\lambda y\|}{\|x\|}\]
as our starting point we can say that  $\theta(x,y)$ is acute if the  infimum is one and is equal to
\[\pi-\arcsin \inf_{\lambda\leq 0}\frac{\|x-\lambda y\|}{\|x\|}\]
otherwise. Note that taking into account that $H$ is a complex Hilbert space a more appropriate name for $\theta(x,y)$   is ``the angle between $x$ and $y$ along the negative axis''.

\subsection{The angle along a curve}

For the rest of this paper $C$  will be an unbounded curve in the complex plane emanating from the origin. If $X$ is a complex Banach space and  $x, y\in X$, with $x\neq 0$  then we define $s_C(x,y)$ by
\[s_C(x,y)=\inf_{\lambda\in C}\frac{\|x-\lambda y\|}{\|x\|}\,.\]

Considering the above discussion we say that the angle $\theta_C(x,y)$ of $x$, $y$ along $C$ is acute  if $s_C(x,y)=1$ and is equal to $\pi-\arcsin s_C(x,y)$ otherwise.

The sine of the operator $A$ along $C$ is then
\begin{equation}
\nonumber
\sin_C A=\inf_{x\notin N(A)}s_C(Ax,x)\,.
\end{equation}
If $C$ is the negative axis instead of $s_C(Ax,x)$ and $\sin_C A$ we will just write $s(Ax,x)$ and $\sin A$.


Recall  that $A$ is called accretive if
\begin{equation}
\nonumber
\inf_{\lambda\leq 0}\frac{\|Ax-\lambda x\|}{\|Ax\|}=1\,,\text{ for all }x\notin N(A)
\end{equation}
which  in a Hilbert space $H$ is equivalent to $Re\langle Ax, x \rangle\geq 0$, for all $x\in H$.

We have the following.
\begin{proposition}
If $A\in B(H)$ is not accretive, then
\[\sin^2 A+\cos^2 A=1\,.\]
\end{proposition}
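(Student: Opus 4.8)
The plan is to exploit the pointwise identity (\ref{identity}) relating $s(Ax,x)$ to the cosine-type quantity, and then to track how the infimum defining $\sin A$ interacts with the infimum defining $\cos A$. First I would record the pointwise dichotomy. For each $x\notin N(A)$ set $c(x)=\dfrac{Re\langle Ax,x\rangle}{\|Ax\|\,\|x\|}$, which makes sense since $Ax\neq 0$. Applying the discussion of the preceding subsections to the pair of vectors $Ax$ and $x$, either the angle $\theta(Ax,x)$ along the negative axis is acute, which happens exactly when $c(x)\geq 0$ and forces $s(Ax,x)=1$, or it is not acute, in which case (\ref{identity}) with $x\mapsto Ax$, $y\mapsto x$ gives $s(Ax,x)^2=1-c(x)^2$, i.e. $s(Ax,x)=\sqrt{1-c(x)^2}$ with $c(x)\in[-1,0)$.

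Next I would translate the hypothesis into a statement about $\cos A$. The operator $A$ is accretive precisely when $c(x)\geq 0$ for every $x\notin N(A)$, i.e. precisely when $\cos A\geq 0$; hence $A$ not accretive is equivalent to $\cos A<0$. Let $U=\{x\notin N(A):c(x)<0\}$ be the set on which the angle is not acute. On the complement of $U$ one has $c(x)\geq 0>\cos A$, so the infimum defining $\cos A$ is governed entirely by $U$, giving $\cos A=\inf_{x\in U}c(x)$, and in particular $U\neq\emptyset$.

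The core computation is to evaluate $\sin A=\inf_{x\notin N(A)}s(Ax,x)$ by splitting along the same dichotomy. On the acute part $s(Ax,x)=1$, while on $U$ we have $s(Ax,x)=\sqrt{1-c(x)^2}$. Since $g(t)=\sqrt{1-t^2}$ is continuous and strictly increasing on $[-1,0]$, the infimum of $g(c(x))$ over $U$ is attained in the limit as $c(x)\to\inf_{x\in U}c(x)=\cos A$, so $\inf_{x\in U}s(Ax,x)=\sqrt{1-\cos^2 A}$. Because $\cos A<0$ this value is strictly less than $1$, so it dominates the contribution from the acute part, and therefore $\sin A=\sqrt{1-\cos^2 A}$, whence $\sin^2 A+\cos^2 A=1$.

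I expect the main obstacle to be justifying the interchange of the monotone continuous map $g$ with the infimum over $U$, namely that $\inf_{x\in U}g(c(x))=g\bigl(\inf_{x\in U}c(x)\bigr)$; monotonicity yields one inequality, and continuity together with a minimizing sequence for $c$ yields the other. A secondary point to handle cleanly is verifying that the acute region never lowers the overall infimum, which is exactly the place where the hypothesis $\cos A<0$, i.e. non-accretivity, is indispensable.
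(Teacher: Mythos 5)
Your proposal is correct and follows essentially the same route as the paper: both restrict the infimum defining $\sin A$ to the set where $Re\langle Ax,x\rangle<0$, apply the identity (\ref{identity}) there, and use the negativity of $c$ on that set to convert $1-\sup c^2$ into $1-(\inf c)^2$. Your version merely spells out more explicitly the monotonicity/continuity justification and the observation that the acute region cannot lower the infimum, which the paper treats as obvious.
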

\begin{proof}
If $A$ is not accretive then the set
\[M=\left\{x\notin N(A):\,Re\langle Ax,x\rangle<0\right\}=\left\{x\notin N(A):\,s(Ax,x)<1\right\}\]
is nonempty and  obviously
\[\sin A=\inf_{x\in M}s(Ax,x)\,.\]
If
\[c(Ax,x)=\frac{Re\langle Ax, x\rangle}{\|Ax\|\,\|x\|}\,,\text{ for }x\notin N(A)\,.\]
we have that
\begin{eqnarray*}
\sin^2 A&=&\inf_{x\in M}s(Ax,x)\\
&=&\inf_{x\in M}[1-c(Ax,x)^2]\\
&=&1-\sup_{x\in M}c(Ax,x)^2\\
&=&1-(\inf_{x\in M}c(Ax,x))^2
\end{eqnarray*}
where the second equality is due to (\ref{identity}) and the last holds because $c(Ax,x)<0$, for all $x\in M$. Hence
\[\sin^2 A+\cos^2 A=1\,.\]
\end{proof}
\begin{remark}
In \cite[Theorem 3.2-1]{gus2} it is shown that if $A$ is strongly accretive then another sine satisfying the basic trigonometric identity may be defined by 
$$
\min_{\varepsilon >0}\|\varepsilon A-I\|\,.
$$
\end{remark}

The angle of $A$ along  $C$ is defined to be
\[\phi_C(A)=\arcsin\sin_C A\,.\]
As long as A is not accretive along $C$ (i.e.  the infimum is less than one), the angle $\phi_C(A)$ may be given a geometric interpretation; it measures the maximum turning effect of $A$ along $C$.
\section{Results}
Our starting point is the  following  Lemma.
\begin{lemma}
\label{lemma}
Let $A:X\rightarrow X$ be a bounded linear operator. If $\phi_C(A)<\pi$ for some curve $C$, then $R(A)\cap N(A)=\left\{0\right\}$ and $\overline{R(A)}+N(A)$ is closed.
\end{lemma}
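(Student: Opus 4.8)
The plan is to extract from the hypothesis a single quantitative inequality and then read off both conclusions from it. First I would unpack the condition $\phi_C(A)<\pi$: measured through the arcsine convention, it amounts to $\sin_C A>0$, so there is $\delta>0$ with $s_C(Ax,x)\geq\delta$ for every $x\notin N(A)$. By the definition of $s_C$ this reads
\[
\|Ax-\lambda x\|\geq\delta\|Ax\|\quad\text{for all }x\notin N(A)\text{ and all }\lambda\in C.
\]
Since the inequality is trivially true when $Ax=0$, it in fact holds for every $x\in X$ and every $\lambda\in C$; call this $(\ast)$.

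The heart of the argument, and the step I expect to be the main obstacle, is to convert $(\ast)$ into a bound relating $R(A)$ and $N(A)$, namely
\[
d(Ax,N(A))\geq\delta\|Ax\|\quad\text{for all }x\in X.
\]
To obtain this I would fix $x\in X$ and $k\in N(A)$ and apply $(\ast)$ not to $x$ but to the shifted vector $v=x+\lambda^{-1}k$ for $\lambda\in C$, $\lambda\neq 0$. Because $k\in N(A)$ we have $Av=Ax$, so $Av-\lambda v=(Ax-k)-\lambda x$, and $(\ast)$ yields $\|(Ax-k)-\lambda x\|\geq\delta\|Ax\|$. Now I would let $\lambda\to 0$ along $C$, which is legitimate precisely because $C$ emanates from the origin; the term $\lambda x$ disappears and we are left with $\|Ax-k\|\geq\delta\|Ax\|$. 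As $k\in N(A)$ was arbitrary, this is the claimed distance bound. The delicate point is exactly this choice of test vector together with the passage $\lambda\to 0$: it is what makes the behaviour of the curve near the origin interact with the kernel. The first conclusion is then immediate, for if $y\in R(A)\cap N(A)$ we write $y=Ax$ and take $k=y$, so that $0=\|Ax-y\|\geq\delta\|y\|$ forces $y=0$; hence $R(A)\cap N(A)=\{0\}$.

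For the closedness of $\overline{R(A)}+N(A)$ the remaining work is routine. The estimate $\|r-k\|\geq\delta\|r\|$, established for $r\in R(A)$ and $k\in N(A)$, extends to every $r\in\overline{R(A)}$ by approximating $r$ with elements of $R(A)$ and passing to the limit in the norm. Finally I would take a convergent sequence $r_n+k_n\to z$ with $r_n\in\overline{R(A)}$ and $k_n\in N(A)$, and apply the extended estimate to the differences $r_n-r_m\in\overline{R(A)}$ and $k_m-k_n\in N(A)$; this shows that $(r_n)$ is Cauchy, so $r_n\to r\in\overline{R(A)}$ and consequently $k_n\to z-r\in N(A)$, giving $z\in\overline{R(A)}+N(A)$. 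Thus the sum is closed, completing the proof.
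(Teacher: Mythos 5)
Your proof is correct and follows essentially the same route as the paper's: you extract the same uniform lower bound $\|Ax-\lambda x\|\geq\delta\|Ax\|$ from $\phi_C(A)<\pi$, apply it to the shifted vector $x+\lambda^{-1}k$ with $k\in N(A)$, and let $\lambda\to 0$ along $C$ to obtain $\|Ax-k\|\geq\delta\|Ax\|$, from which both conclusions follow exactly as in the paper. The only difference is that you spell out the standard Cauchy-sequence argument for the closedness of $\overline{R(A)}+N(A)$, which the paper leaves implicit.
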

\begin{proof}
 If $\phi_C(A)<\pi$ then there exists $0<c\leq 1$ such that
\begin{equation}
\nonumber
\|Ax+\lambda x\|\geq c\|Ax\|\,,\text{ for all }x\in X \text{ and all }\lambda\in C\,.
\end{equation}
 Let $x\in X$ and $y\in N(A)$. Since $C$ emanates from the origin there exists a sequence $(\lambda_n)$  of points in $C$ with $\lambda_n\rightarrow 0$, as $n\rightarrow\infty$ and  the above inequality  implies that
\[\|A(x+\frac{y}{\lambda_n})+\lambda_n(x-\frac{y}{\lambda_n})\|\geq c\|A(x+\frac{y}{\lambda_n})\|\,,\]
hence
\[\|Ax+\lambda_n x+y\|\geq c\|Ax\|\,,\text{ for all } n\in\mathbb N\,.\]
Letting $n\rightarrow+\infty$ we get
\begin{equation}
\label{closed}
\|Ax+y\|\geq c\|Ax\|\,,
\end{equation}
for all $x\in X$, $y\in N(A)$ and thus $\overline{R(A)}+ N(A)$ is closed. Moreover if $Ax\in N(A)$, then (\ref{closed}) implies that
\[\|Ax-Ax\|\geq c\|Ax\|\]
and hence $Ax=0$.
\end{proof}
In what follows by $D_\infty$ we denote the unbounded component of the resolvent set of $A$. We will use the following result which is a corollary of the so called ``filling the hole'' theorem.
\begin{proposition}[\cite{radjavi}, Theorem 0.8]
\label{unbounded}
If $M$ is a closed invariant subspace of $A$, then
\[\sigma(A|_M)\cap D_\infty=\emptyset\,.\]
\end{proposition}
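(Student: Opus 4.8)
The plan is to show that every $\lambda$ in the unbounded component $D_\infty$ lies in the resolvent set of the restriction $A|_M$, by exploiting the analyticity of the resolvent together with the closedness and $A$-invariance of $M$. The statement is, after all, a reformulation of the ``filling the hole'' phenomenon: only the bounded components of the resolvent set can get absorbed into $\sigma(A|_M)$.

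First I would treat the region near infinity. Since the spectral radius of $A$ is at most $\|A\|$, the set $\{|\lambda|>\|A\|\}$ is a connected unbounded subset of the resolvent set, hence contained in $D_\infty$, and there the Neumann series
\[(\lambda-A)^{-1}=\sum_{n=0}^{\infty}\frac{A^n}{\lambda^{n+1}}\]
converges in operator norm. Since $M$ is $A$-invariant, each partial sum maps $M$ into $M$, and since $M$ is closed so does the norm limit; thus $(\lambda-A)^{-1}M\subseteq M$ for all large $|\lambda|$.

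Next I would propagate this inclusion across all of $D_\infty$ by analytic continuation. Fix $x\in M$ and let $q:X\to X/M$ be the quotient map, which is bounded because $M$ is closed. The map
\[\lambda\mapsto q\big((\lambda-A)^{-1}x\big)\]
is an $X/M$-valued analytic function on the resolvent set, and by the previous step it vanishes on the nonempty open set $\{|\lambda|>\|A\|\}\subseteq D_\infty$. Composing with an arbitrary functional in $(X/M)^{*}$ reduces matters to a scalar analytic function on the connected open set $D_\infty$ that vanishes on a nonempty open subset; the identity theorem forces it to vanish identically on $D_\infty$, and Hahn--Banach then yields $q\big((\lambda-A)^{-1}x\big)=0$, i.e. $(\lambda-A)^{-1}x\in M$, for every $\lambda\in D_\infty$.

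Finally, having $(\lambda-A)^{-1}M\subseteq M$ throughout $D_\infty$, I would conclude as follows: for $\lambda\in D_\infty$ the operator $(\lambda-A)|_M:M\to M$ is injective, being a restriction of the injective $\lambda-A$, and it is surjective with bounded inverse $(\lambda-A)^{-1}|_M$; hence $\lambda\in\rho(A|_M)$, which gives $\sigma(A|_M)\cap D_\infty=\emptyset$. The one delicate point is the passage from ``near infinity'' to all of $D_\infty$, and this is precisely where the connectedness of the unbounded component and the vector-valued identity theorem are indispensable; it is also the structural reason why the conclusion cannot be extended to bounded components of the resolvent set, which may genuinely be filled in.
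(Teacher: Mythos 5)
The paper does not prove this proposition at all --- it is quoted verbatim from Radjavi--Rosenthal (Theorem 0.8), so there is nothing internal to compare against. Your argument is correct and complete, and it is essentially the standard proof of the ``filling the hole'' theorem: the Neumann series gives $(\lambda-A)^{-1}M\subseteq M$ for $|\lambda|>\|A\|$, and the vector-valued identity theorem (applied to $\lambda\mapsto q((\lambda-A)^{-1}x)$ on the connected open set $D_\infty$, with Hahn--Banach to reduce to scalars) propagates this to all of $D_\infty$, whence $(\lambda-A)|_M$ is invertible on $M$ there. The only cosmetic variant in the literature is to replace the identity theorem by a clopen argument (the set of $\lambda\in\rho(A)$ with $(\lambda-A)^{-1}M\subseteq M$ is open via the local power-series expansion of the resolvent and closed by continuity); both routes use the connectedness of $D_\infty$ in the same essential way, and your closing remark correctly identifies why the statement fails for bounded components.
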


Note that the existence of the unbounded curve emanating from the origin is equivalent to $0\in  \overline{D}_\infty$. Our main result is the following. 
\begin{theorem}
\label{main}
Let $A:X\rightarrow X$ be a bounded linear operator and assume that $0\in \overline{D}_\infty$.
Then
$$X=R(A)\oplus N(A)$$
if and only if $R(A)$ is closed and $\phi_C(A)<\pi$, for some $C$.
\end{theorem}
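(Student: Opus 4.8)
The plan is to prove both implications by reducing the decomposition $X=R(A)\oplus N(A)$ to the two conditions $\alpha(A)\le 1$ and $\delta(A)\le 1$ recalled in the Preliminaries, and to control the behaviour of $A$ at the origin through the restriction $A|_{R(A)}$ together with Proposition~\ref{unbounded}. In both directions the key point is that $R(A)$ is a closed invariant subspace, so $D_\infty\subseteq\rho(A|_{R(A)})$, and that $0\in\overline{D}_\infty$ pins $0$ to the boundary of $\sigma(A|_{R(A)})$ whenever $0$ belongs to that spectrum.

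For the reverse implication, suppose $R(A)$ is closed and $\phi_C(A)<\pi$ for some $C$. By Lemma~\ref{lemma} I immediately get $R(A)\cap N(A)=\{0\}$, which is exactly $\alpha(A)\le 1$, and also that $R(A)+N(A)=\overline{R(A)}+N(A)$ is closed; since the sum is algebraically direct and both summands are closed, the open mapping theorem supplies a bounded projection onto $R(A)$ along $N(A)$, say of norm $\le K$. It remains to produce $\delta(A)\le 1$, i.e.\ $R(A)=R(A^2)$, and this is the heart of the argument. By Proposition~\ref{unbounded} one has $D_\infty\subseteq\rho(A|_{R(A)})$. I claim $0\notin\sigma(A|_{R(A)})$: if it were, then since $0\in\overline{D}_\infty\subseteq\overline{\rho(A|_{R(A)})}$ the point $0$ would lie on $\partial\sigma(A|_{R(A)})\subseteq\sigma_{app}(A|_{R(A)})$, giving unit vectors $x_n\in R(A)$ with $Ax_n\to 0$. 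The distance inequality (\ref{distance}) then forces $d(x_n,N(A))\to 0$, so I can choose $z_n\in N(A)$ with $\|x_n-z_n\|\to 0$; but $x_n\in R(A)$ and $z_n\in N(A)$, so the bounded projection yields $1=\|x_n\|\le K\|x_n-z_n\|\to 0$, a contradiction. Hence $A|_{R(A)}$ is invertible, in particular surjective, so $R(A)=A(R(A))=R(A^2)$ and $\delta(A)\le 1$; together with $\alpha(A)\le 1$ this gives the decomposition.

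For the forward implication, suppose $X=R(A)\oplus N(A)$. Then $R(A)$ is closed, $B:=A|_{R(A)}$ is a bijection of $R(A)$ with $0\in\rho(B)$, and the projections $P$, $Q$ onto $R(A)$ and $N(A)$ are bounded. Using $0\in\overline{D}_\infty$ I choose an unbounded curve $C$ emanating from the origin with $C\setminus\{0\}\subseteq D_\infty$; by Proposition~\ref{unbounded} this gives $C\setminus\{0\}\subseteq\rho(B)$, and since $0\in\rho(B)$ as well, in fact $C\subseteq\rho(B)$. As $\lambda\mapsto\|(B-\lambda)^{-1}\|$ is continuous on $C\subseteq\rho(B)$ and decays like $1/|\lambda|$ as $\lambda\to\infty$, it is bounded on $C$, say by $R$. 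For $x\notin N(A)$ write $x=u+v$ with $u\in R(A)\setminus\{0\}$ and $v\in N(A)$; then $Ax=Bu$ and $Ax-\lambda x=(B-\lambda)u-\lambda v$, whose $R(A)$-component is $(B-\lambda)u$. Projecting gives $\|Ax-\lambda x\|\ge\|P\|^{-1}\|(B-\lambda)u\|\ge(\|P\|\,R)^{-1}\|u\|$, while $\|Ax\|=\|Bu\|\le\|B\|\,\|u\|$, so the ratio is bounded below by $(\|P\|\,R\,\|B\|)^{-1}>0$ uniformly in $x$ and $\lambda\in C$. Thus $\sin_C A>0$, i.e.\ $\phi_C(A)<\pi$.

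The main obstacle is the reverse implication, precisely the passage from ``$0$ lies on the boundary of $\sigma(A|_{R(A)})$'' to a contradiction: one must combine the approximate eigenvectors coming from $\partial\sigma\subseteq\sigma_{app}$ with \emph{both} the distance inequality and the bounded projection secured from Lemma~\ref{lemma}, and it is the interplay of these three ingredients, rather than any single estimate, that delivers $\delta(A)\le 1$. The forward implication is comparatively routine once the uniform resolvent bound along $C$ near the origin is justified by $0\in\rho(B)$.
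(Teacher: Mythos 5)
Your proof is correct and follows essentially the same route as the paper: the forward direction uses $C\subseteq\rho(A|_{R(A)})$ from Proposition~\ref{unbounded} together with the bounded projection to get a uniform lower bound on $\|Ax-\lambda x\|/\|Ax\|$, and the reverse direction uses Lemma~\ref{lemma} to get $\alpha(A)\le 1$ and then rules out $0\in\sigma(A|_{R(A)})$ via $\partial\sigma\subseteq\sigma_{app}$ and a lower bound for $A$ on $R(A)$. The only cosmetic differences are that you obtain the uniform bound along $C$ from the resolvent-norm estimate rather than the paper's compactness/contradiction argument, and you phrase the lower bound on $R(A)$ through the bounded projection rather than directly through inequality~(\ref{distance}).
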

\begin{proof}
If $X=N(A)\oplus R(A)$ then there exists $\delta>0$ such that
\begin{equation}
\label{closed_1}
\|Ax+y\|\geq\delta \|Ax\|\,,\text{ for all } x\in X, y\in N(A)\,.
\end{equation}
By Proposition \ref{unbounded} we have that
\[\sigma(A|_{R(A)})\cap D_\infty=\emptyset\,.\]
Hence $D_\infty$ lies in the resolvent of $A|_{R(A)}$ and thus there exists an unbounded path $C$ emanating from the origin with
\[C\subseteq\rho(A|_{R(A)})\,.\]
Since $A|_{R(A)}$ is invertible we claim that there exists $k>0$ such that
\begin{equation}
\label{uniform}
\|Ax-\lambda x\|\geq k\|x\|\,,\text{ for all } x\in R(A), \lambda\in C\,.
\end{equation}
Assume the contrary;  i.e., that there exists a sequence $(\lambda_n)$ in $C$ and a sequence $(x_n)$ in $R(A)$, with $\|x_n\|=1$, such that $\|Ax_n-\lambda_n x_n\|\rightarrow 0$, as $n\rightarrow\infty$. Then since $(\lambda_n)$ is bounded it has a subsequence, which for simplicity we denote again by $(\lambda_n)$, that converges to some $\lambda_0\in C$. But then
\[\|Ax_n-\lambda_0x_n\|\leq \|Ax_n-\lambda_nx_n\|+|\lambda_n-\lambda_0|\]
and hence $\|Ax_n-\lambda_0x_n\|\rightarrow 0$, as $n\rightarrow\infty$, which is a contradiction since $\lambda_0\in\rho(A|_{R(A)})$.
We will show that $\phi_C(A)<\pi$.

To this end let $x\in X$. By hypothesis $x=z+y$, with $z\in R(A)$ and $y\in N(A)$. Using (\ref{closed_1}) and (\ref{uniform}) we have that
\[\|Ax+\lambda x\|=\|Az+\lambda z+\lambda y\|\geq\delta\|Az+\lambda z\|\geq \delta k\|z\|\geq \frac{\delta k}{\|A\|}\|Az\|=c\|Ax\|\,.\]
Hence $s_C(Ax,x)\geq c$, for all $x\notin N(A)$ and so $\phi_C(A)<\pi$.

Conversly, assume that  $\phi_C(A)<\pi$, for some $C$. Then by  Lemma \ref{lemma} we have that $R(A)\cap N(A)=\left\{0\right\}$ and hence the ascent $\alpha(A)$ of $A$ is lesser or equal to 1. To conclude the proof we have to show that the descent $\delta(A)$ is finite. In particular we will show that $R(A^2)=R(A)$. 

By  Lemma \ref{lemma}, using the fact that $R(A)$ is closed, we have that $R(A)+N(A)$ is a closed subspace of $X$ which implies that $R(A^2)$ is also closed. Hence since $R(A)\cap N(A)=\left\{0\right\}$ by (\ref{distance}) we have that
\begin{equation}
\label{eq1}
\|Ax\|\geq\|x\|\,,\text{ for all }x\in R(A)\,.
\end{equation}
If $0\in\sigma(A|_{R(A)})$ and since by Proposition \ref{unbounded} we have that
\[\sigma(A|_{R(A)})\cap D_\infty=\emptyset\]
we get that $0\in\partial\sigma(A|_{R(A)})\subseteq\sigma_{app}(A|_{R(A)})$ which contardicts (\ref{eq1}). So $0\notin\sigma(A|_{R(A)})$ and the proof is complete.
\end{proof}

Recall that a hole in a compact subset of the complex plane is a bounded connected component of its complement. Using the above we can  show that  if we have range-kernel complementarity then $0$ faces the unbounded component of the resolvent set if and only if some angle of $A$ is less than $\pi$.
\begin{proposition}
\label{holes}
Let $A\in B(X)$ and assume that $X=N(A)\oplus R(A)$. Then $0\in \overline{D}_\infty$ if and only if $\phi_C(A)<\pi$, for some $C$.
\end{proposition}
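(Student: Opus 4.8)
The plan is to dispatch the forward implication immediately and to concentrate all the work on the converse. If $0\in\overline D_\infty$, then since the hypothesis $X=N(A)\oplus R(A)$ forces $R(A)$ to be a closed (complemented) subspace, we are in exactly the situation covered by the ``only if'' part of Theorem \ref{main}: its standing hypothesis $0\in\overline D_\infty$ holds, and the decomposition is assumed, so the argument there produces a curve $C$ with $\phi_C(A)<\pi$. Thus the real content lies in the converse, and that is where I would spend the effort.

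For the converse, assume $\phi_C(A)<\pi$ for some $C$. I would first extract, exactly as in the proof of Lemma \ref{lemma}, a constant $0<c\le 1$ with $\|Ax-\lambda x\|\ge c\|Ax\|$ for all $x\in X$ and all $\lambda\in C$ (after possibly replacing $C$ by $-C$, which is again an unbounded curve emanating from the origin, so as to match the sign). Thus $\|(A-\mu I)x\|\ge c\|Ax\|$ for all $x\in X$ and all $\mu\in C$. Since $X=N(A)\oplus R(A)$, both $N(A)$ and $R(A)$ are closed and $A$-invariant, and the restriction $A_0=A|_{R(A)}$ is invertible; with respect to this decomposition $A-\mu I$ is block diagonal, so for $\mu\ne 0$ one has $\mu\in\rho(A)$ if and only if $\mu\in\rho(A_0)$.

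The crucial step is to show $C\subseteq\rho(A_0)$. Restricting the displayed inequality to $x\in R(A)$ and using $\|A_0x\|\ge m\|x\|$ for some $m>0$ (invertibility of $A_0$), I would obtain $\|(A_0-\mu I)x\|\ge cm\|x\|$ on $R(A)$, so $A_0-\mu I$ is bounded below and $\mu\notin\sigma_{app}(A_0)$ for every $\mu\in C$. In particular $C$ misses $\partial\sigma(A_0)\subseteq\sigma_{app}(A_0)$, and it misses it at $\mu=0$ as well, since $0\in\rho(A_0)$. Now $\mathbb C\setminus\partial\sigma(A_0)$ is the disjoint union of the two open sets $\rho(A_0)$ and the interior of $\sigma(A_0)$; as $C$ is connected and meets $\rho(A_0)$ at the origin, connectedness forces $C\subseteq\rho(A_0)$. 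The point I expect to be the main obstacle is precisely this upgrade: the inequality only gives that $A_0-\mu I$ is bounded below, i.e.\ $\mu\notin\sigma_{app}(A_0)$, and not directly that $\mu\in\rho(A_0)$; it is the combination of $\partial\sigma(A_0)\subseteq\sigma_{app}(A_0)$ with the connectedness of $C$ anchored at the resolvent point $0$ that closes the gap.

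Finally I would conclude. For $\mu\in C\setminus\{0\}$ we now have $\mu\ne 0$ and $\mu\in\rho(A_0)$, hence $\mu\in\rho(A)$. Writing $C$ as the image of a continuous unbounded path $\gamma\colon[0,\infty)\to\mathbb C$ with $\gamma(0)=0$, the set $\gamma((0,\infty))$ is connected, unbounded, and contained in $\rho(A)$, so it lies in a single connected component of $\rho(A)$, which by unboundedness must be $D_\infty$. Since $\gamma(t)\to 0$ as $t\to 0^+$, the origin is a limit of points of $D_\infty$, that is $0\in\overline D_\infty$, which completes the proof.
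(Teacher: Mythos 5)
Your proof is correct, and while the forward implication is handled exactly as in the paper (by citing Theorem \ref{main}), your converse takes a genuinely different route: you argue directly, whereas the paper argues by contraposition. The paper assumes $0$ is separated from $D_\infty$, observes that every unbounded curve from the origin must then cross $\partial\sigma(A)\subseteq\sigma_{app}(A)$ at some $\lambda_0$, projects the resulting approximate eigenvectors onto $R(A)$ using the closedness of the decomposition, and contradicts the lower bound $\|Ax+\lambda_0 x\|\geq k\|x\|$ on $R(A)$ that $\phi_C(A)<\pi$ would force. You instead start from that same lower bound, conclude $C\cap\sigma_{app}(A_0)=\emptyset$ for $A_0=A|_{R(A)}$, and then upgrade this to $C\subseteq\rho(A_0)$ by writing $\mathbb{C}\setminus\partial\sigma(A_0)$ as a disjoint union of two open sets and using connectedness of $C$ anchored at $0\in\rho(A_0)$; the block-diagonal observation then places $C\setminus\{0\}$ in $D_\infty$. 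Both arguments rest on the same two pillars (invertibility of $A_0$ coming from the decomposition, and $\partial\sigma\subseteq\sigma_{app}$), but yours replaces the paper's unproved topological claim that the curve ``must intersect'' $\partial\sigma(A)$ with the cleaner clopen-decomposition argument, and it explicitly identifies the component of the resolvent set containing the curve; the paper's version, in exchange, makes vivid the geometric statement that \emph{every} angle equals $\pi$ when $0$ sits in a hole. One small point to tidy: your assertion that $\gamma((0,\infty))\subseteq\rho(A)$ tacitly assumes the curve does not revisit the origin (where $A$ fails to be invertible when $N(A)\neq\{0\}$); this is harmless, since the component $U$ of $\rho(A_0)$ containing $C$ is open, connected and unbounded, $U\setminus\{0\}$ remains so and lies in $\rho(A)$, hence in $D_\infty$, and $0\in\overline{U\setminus\{0\}}$.
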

\begin{proof}
If $0\in\overline{D}_\infty$ the result follows from Theorem \ref{main}. Conversly assume that $0$ is inside a hole in $\sigma(A)$.  We will show that all angles of $A$ are equal to $\pi$. To see this note that in this case every unbounded path $C$ emanating from the origin intersects $\partial\sigma(A)$ and in patricular $\sigma_{app}(A)$ and hence if  $\lambda_0$ is this point of intersection there exists a sequence $(x_n)$ in $X$ such that
\[\|Ax_n+\lambda_0 x_n\|\rightarrow 0\,,\text{ as } n\rightarrow\infty\,.\]
Since $X=N(A)\oplus R(A)$ each $x_n$ may be written as $x_n=y_n+z_n$, with $y_n\in N(A)$ and $z_n\in R(A)$. But then and since the sum $N(A)\oplus R(A)$ is closed we have that
\[\|Ax_n+\lambda_0 x_n\|=\|Az_n+\lambda_0 y_n+ \lambda_0 z_n\|\geq c' \|Az_n+ \lambda_0 z_n\|\]
and hence
\begin{equation}
\label{zero}
\|Az_n+\lambda_0 z_n\|\rightarrow 0\,,\text{ as } n\rightarrow\infty\,.
\end{equation}
On the other hand if $\phi_C(A)<\pi$, then by $X=N(A)\oplus R(A)$, the range of $A|_{R(A)}$ is closed and $N(A)\cap R(A)=\left\{0\right\}$. Hence
\[\|Ax+\lambda_0 x\|\geq c\|Ax\|\geq k\|x\|\,,\]
for all $x\in R(A)$. But this contradicts (\ref{zero}) and thus $\phi_C(A)=\pi$, for all $C$. So $\phi_C(A)<\pi$ implies that $0\in D_\infty$.
\end{proof}

\bibliographystyle{amsplain}

\begin{thebibliography}{99}

\bibitem{drivyann} D. Drivaliaris, N. Yannakakis, \textit{The angle of an operator and range - kernel complementarity}, J Op Theory \textbf{76} (2016), 205--218 .
\bibitem{gus1} K. Gustafson, \textit{The angle of an operator and positive operator products}, Bull. Amer. Math. Soc. \textbf{74} (1968), 488--492.
\bibitem{gus2} K. Gustafson, D. Rao \textit{Numerical range: the field of values of linear operators and matrices},  Springer, New York, (1997).
\bibitem{heuser} H. Heuser. \textit{Functional Analysis}, John Wiley and Sons, (1982).
\bibitem{krein} M. Krein, \textit{Angular localization of the spectrum of a multiplicative integral in a Hilbert space}, Functional Analysis and Its Applications
\textbf{3} (1969), 73--74 
\bibitem{radjavi} H. Radjavi, P. Rosenthal  \textit{Invariant subspaces}, Springer, New York, (1973).
\end{thebibliography}


\end{document}